\newtheorem{prop}{Proposition}[section]
\newtheorem{lemma}[prop]{Lemma}
\newtheorem{theorem}{Theorem}
\newtheorem{cor}[prop]{Corollary}
\theoremstyle{definition}
\newtheorem{example}[prop]{Example}
\newtheorem{defn}[prop]{Definition}
\newcommand{\CC}{\mathbb{C}}
\renewcommand{\P}{\mathbb{P}}
\newcommand{\I}{\mathcal{I}}
\renewcommand{\O}{\mathcal{O}}
\newcommand{\Z}{\mathbb{Z}}
\newcommand{\Q}{\mathbb{Q}}
\newcommand{\E}{\mathcal{E}}
\newcommand{\bI}{\mathbb{I}}
\newcommand{\hilb}{\operatorname{Hilb}}
\newcommand{\Home}{\mathcal{H}om}
\newcommand{\ch}{\operatorname{ch}}
\renewcommand{\H}{\mathcal{H}}
\title{Counting curves on surfaces in Calabi-Yau 3-folds}
\author{Amin Gholampour, Artan Sheshmani and Richard Thomas}
\date{\today}                                   
\begin{document}
\maketitle
\begin{abstract}
Motivated by S-duality modularity conjectures in string theory, we define new invariants counting a restricted class of 2-dimensional torsion sheaves, enumerating pairs $Z\subset H$ in a Calabi-Yau threefold $X$. Here $H$ is a member of a sufficiently positive linear system and $Z$ is a 1-dimensional subscheme of it. The associated sheaf is the ideal sheaf of $Z\subset H$, pushed forward to $X$ and considered as a certain Joyce-Song pair in the derived category of $X$. We express these invariants in terms of the MNOP invariants of $X$.

\end{abstract}
\section{Introduction} \label{sec:intro}
\hfill Throughout this paper we fix a Calabi-Yau 3-fold $X$ with $H^1(\O_X)=0$.
\\ String theorists conjecture the modularity of certain generating functions of counts of supersymmetric D4-D2-D0 BPS states of $X$ \cite{a78,a95,a100}. Motivated by this we define invariants of $X$ which count 0- and 1-dimensional subschemes (i.e. D2-D0 branes) of hyperplanes (D4 branes) in $X$. We briefly discuss the relationship to the physicists' S-duality conjecture in Section \ref{ami}. 

We wish to count pairs of the form
\begin{equation} \label{triple}
Z\subset H\quad\mathrm{in\ \ }X,
\end{equation}
where $H$ is a suitably ample hypersurface and $Z$ is a 1-dimensional subscheme of $H$. While we demand that $H$ be pure of dimension 2 (i.e. a divisor in $X$), the subscheme $Z$ need not be pure, so is in general the union of a curve and a 0-dimensional subscheme.

Since $H^1(X,\O_X)=0$, all deformations of $H$ are in the same linear system $$|H|:=\P(H^0(\O_X(H))).$$ This carries a universal hyperplane
\begin{equation} \label{uH}
\xymatrix@C=-5pt@R=15pt{
\H \ar[d] & \subset\,X\times|H| \\ |H|.}
\end{equation}
The data \eqref{triple} are naturally parameterized by the relative Hilbert scheme of 1-dimensional subschemes of the fibers of the family \eqref{uH}.
That is, if we fix $\beta\in H_2(X,\Z)$ and $n\in\Z$, then the moduli space of pairs \eqref{triple} is the relative Hilbert scheme
\begin{equation} \label{hilber}
\hilb_{\beta,n}(\H/|H|)
\end{equation}
of 1-dimensional subschemes $Z$ of the universal hyperplane whose pushforward to $X$ has fundamental class $\beta=[Z]=\ch_2(i_*\O_Z)$ and holomorphic Euler characteristic $n=\chi(\O_Z)$.

We will produce a symmetric perfect obstruction theory on \eqref{hilber} by considering it as a moduli space of torsion sheaves of the form
\begin{equation} \label{shea}
i_*I_Z\,\in\,\operatorname{Coh}(X).
\end{equation}
Here $i\colon H\hookrightarrow X$ is the inclusion, and $I_Z$ denotes the ideal sheaf of $Z$ considered as a subscheme of $H$. So \eqref{shea} is a torsion sheaf on $X$ with rank 1 on its 2-dimensional support $H$. When $Z\subset H$ is a Cartier divisor, it is the pushforward of a line bundle on $H$.

\begin{theorem} \label{thm:torsion}
Suppose that $H$ is sufficiently positive with respect to $\beta\in H_2(X;\Z)$ and $n\in \Z$, in the sense of Definition \ref{def:pos}. Then 
$$\hilb_{\beta,n}(\mathcal H/|H|)$$ is a locally complete moduli space of torsion sheaves \eqref{shea} on $X$. It admits a symmetric perfect obstruction theory and so a virtual cycle of virtual dimension 0.
\end{theorem}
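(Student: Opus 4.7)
The plan is to realize $\hilb_{\beta,n}(\H/|H|)$ as an open substack of a moduli stack of torsion sheaves on $X$ and to transport the natural Calabi--Yau obstruction theory across this identification.

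I would begin by constructing the classifying morphism
$$\Phi\colon\hilb_{\beta,n}(\H/|H|)\longrightarrow\M_X,$$
where $\M_X$ is the moduli stack of torsion sheaves on $X$ with the Chern character of $i_*I_Z$. The universal subscheme $\mathcal{Z}\subset\H\times_{|H|}\hilb_{\beta,n}(\H/|H|)$ produces a relative ideal sheaf; its pushforward to $X\times\hilb_{\beta,n}(\H/|H|)$ is flat in the Hilbert scheme direction (Serre vanishing along the fibres $H\subset X$, guaranteed by the positivity of Definition \ref{def:pos}, is what ensures flatness and gives the stipulated numerical invariants), defining $\Phi$. Injectivity on geometric points is immediate: $H$ is recovered as the scheme-theoretic support of $i_*I_Z$, and $Z$ is read off from the canonical inclusion $i_*I_Z\hookrightarrow(i_*I_Z)^{\vee\vee}\cong i_*\O_H$, whose cokernel is $i_*\O_Z$.

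The heart of the argument is a deformation-theoretic comparison. Applying $\Hom_X(i_*I_Z,-)$ and $\Hom_X(-,i_*I_Z)$ to the two short exact sequences
$$0\to\O_X(-H)\to\O_X\to i_*\O_H\to 0,\qquad 0\to i_*I_Z\to i_*\O_H\to i_*\O_Z\to 0,$$
and using adjunction for the closed immersion $i$, I would reduce the computation of $\Ex^\bullet_X(i_*I_Z,i_*I_Z)$ to Ext groups on $H$ twisted by (powers of) $\O_X(H)|_H$, plus cohomology of twists of $\O_X$. The positivity hypothesis should kill the spurious contributions, yielding a natural splitting of $\Ex^1_X(i_*I_Z,i_*I_Z)$ into deformations of $H$ inside $|H|$ and deformations of $Z$ inside $H$. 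This identifies $\Ex^1_X$ with the tangent space to $\hilb_{\beta,n}(\H/|H|)$ and upgrades $\Phi$ to an open embedding on deformation functors --- this is local completeness.

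For the symmetric perfect obstruction theory, Serre duality on the Calabi--Yau 3-fold yields $\Ex^i_X(\F,\F)\cong\Ex^{3-i}_X(\F,\F)^*$, so the trace-free piece of $R\Hom_X(\F,\F)$, suitably shifted, supplies a two-term symmetric obstruction theory with virtual dimension $-\chi(\F,\F)=0$. Since the sheaves $i_*I_Z$ are far from simple, I would rigidify by passing to Joyce--Song pairs $(\F,s)$ with a nonzero section $s\colon\O_X(-kH)\to\F$ for $k\gg 0$; this produces a Deligne--Mumford moduli space whose POT inherits the symmetry, and whose underlying moduli is locally isomorphic to $\hilb_{\beta,n}(\H/|H|)$ by the preceding step, so the POT descends through $\Phi$. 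The principal obstacle is the Ext bookkeeping: one must show that \emph{every} first-order deformation of $i_*I_Z$ on $X$ arises from a deformation of the pair $(Z\subset H)$, including the ones that \emph{a priori} might take the sheaf out of the locus of pushforwards from divisors. Concretely, this reduces to the vanishing of certain connecting maps in the long exact Ext sequences attached to the two short exact sequences above, and this vanishing is precisely what the positivity built into Definition \ref{def:pos} is designed to deliver.
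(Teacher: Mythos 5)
Your overall architecture (realize $\hilb_{\beta,n}(\H/|H|)$ inside a moduli problem for sheaves, rigidify with Joyce--Song pairs, extract the symmetric obstruction theory from Calabi--Yau Serre duality) matches the paper's, but the pivotal technical choice is different and, as written, your version breaks down. You attach the Joyce--Song section to the torsion sheaf itself, $s\colon\O_X(-kH)\to\F=i_*I_Z$. Joyce--Song pair theory requires the underlying sheaf to be Gieseker \emph{semistable}, and $i_*I_Z$ need not be semistable when $H$ is reducible or non-reduced --- this is exactly the first obstacle the paper flags, and adding a section does not remove it (an unstable sheaf admits no JS pair structure at all). Moreover, even where such a pair moduli space exists, its obstruction theory is governed by $R\Home$ of the two-term complex $\{\O_X(-kH)\to\F\}$, which for torsion $\F$ is \emph{not} quasi-isomorphic to $\F$ (the section has a large kernel), so you would still owe a nontrivial comparison between the pair deformation theory and the sheaf deformation theory. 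The idea your proposal is missing is to put the pair structure on the \emph{ideal sheaf $\I_Z$ of $Z$ in $X$}, with section $s\in H^0(\I_Z(H))$ cutting out $H$: then the underlying sheaf is rank-one torsion-free, hence automatically stable (problem one disappears), and the JS complex $\{\O(-H)\stackrel{s}{\longrightarrow}\I_Z\}$ is genuinely quasi-isomorphic to $i_*I_Z$, so Joyce--Song's symmetric obstruction theory \emph{is} the deformation theory of the torsion sheaf, up to a twist by $\O(1)$ from $|H|$ whose effect on the Atiyah class vanishes because $H^1(\O_X)=0$.

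Two further concrete problems. First, your recovery of $Z$ from the sheaf via $(i_*I_Z)^{\vee\vee}\cong i_*\O_H$ is ill-posed: the $\O_X$-dual of a torsion sheaf is zero, and the dual taken on $H$ is delicate precisely when $H$ is non-normal or non-reduced, which is the hard case. Second, your local-completeness step (``every first-order deformation of $i_*I_Z$ comes from a deformation of $Z\subset H$'') rests on an unperformed $\Ex$ computation that is genuinely difficult for singular members of $|H|$; the paper instead proves the global statement in Proposition \ref{tube} by classifying the universal JS sheaf directly: its double dual is locally free by Koll\'ar's lemma, the induced map to $\operatorname{Pic}(X)$ is constant since $H^1(\O_X)=0$, hence the universal sheaf is a twist of an ideal sheaf $\I_{\mathcal Z}$ and the section cuts out a divisor in $|H|$ containing $\mathcal Z$. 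If you wish to salvage your route you must either carry out the $\Ex^1$ splitting on arbitrary members of $|H|$ or reroute through the ideal sheaf as the paper does.
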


There are two obvious problems to overcome in proving Theorem \ref{thm:torsion}:

\begin{itemize}
\item The sheaves \eqref{shea} need not be (Gieseker) semistable when $H$ is reducible or nonreduced.
\item Deformations of \eqref{shea} might not, \emph{a priori}, be sheaves of the same form. They could be arbitrary torsion sheaves of the same topological type, like the pushforward $i_*L$ of a general line bundle on a hyperplane $H$ (rather than one which is a subsheaf of $\O_H$).
\end{itemize}

We circumvent these by using certain Joyce-Song pairs \cite{a30}. These come with a different notion of stability which gets round the first problem. And for $H\gg0$, they allow us to see that deformations of \eqref{shea} are also push forwards of ideal sheaves, so that $\hilb_{\beta,n}(\H/|H|)$ is indeed an open and closed subscheme of the stack of all coherent sheaves on $X$.

Our Joyce-Song pairs are of the form
$$
\Big(\I_{Z\,},\,s\in H^0(\I_Z(H))\Big),
$$
where $Z$ and $H$ are as before, but $\I_Z$ denotes the ideal sheaf of $Z$ when considered as a subscheme of $X$ (rather than $H$).\footnote{We use straight $I$s to denote ideal sheaves on $H$ and curly $\I$s for ideal sheaves on $X$.} Since the torsion-free rank-1 sheaf $\I_Z$ is automatically Gieseker stable, the only further stability condition we need is that $s$ should be nonzero. Therefore we get an injection
\begin{equation} \label{JSideal}
\O(-H)\stackrel s\longrightarrow\I_Z
\end{equation}
whose cokernel $i_*I_Z$ is the torsion sheaf \eqref{shea}. That is, if we let $I^\bullet\in D(X)$ denote the two term complex formed by \eqref{JSideal}, then
$$
\text{\emph{the Joyce-Song complexes $I^\bullet$ \eqref{JSideal} are quasi-isomorphic to the torsion sheaves \eqref{shea}.}}
$$
More globally we show in Proposition \ref{tube} that this observation gives an isomorphism between
\begin{itemize}
\item an open and closed subscheme of the moduli space of Joyce-Song pairs \eqref{JSideal}, and
\item the relative Hilbert scheme \eqref{hilber} parameterizing the sheaves \eqref{shea}.
\end{itemize}
Finally, for $H\gg0$, Joyce and Song show that the deformation theory of the complexes $I^\bullet\in D(X)$ gives this space a symmetric perfect obstruction theory. Since $I^\bullet$ is quasi-isomorphic to $i_*I_Z\in D(X)$, this shows that the deformation theory of the sheaf $i_*I_Z$ indeed endows $\hilb_{\beta,n}(\H/|H|)$ with a symmetric perfect obstruction theory.

\medskip

Using Theorem \ref{thm:torsion} we can define an invariant
$$
N^H_{\beta,n}(X):=\int_{[\hilb_{\beta,n}(\mathcal H/|H|)]^{vir}}1.
$$
Since the obstruction theory is symmetric, by \cite{a1} this can also be written as a weighted Euler characteristic of $\hilb_{\beta,n}(\mathcal H/|H|)$.  The invariants $N^H_{\beta,n}$ are closely related to the MNOP invariants $I_{\beta,n}(X)$ counting the ideal sheaves $\I_Z$ of $X$.

\begin{theorem} \label{thm:mnop} Under the conditions of Theorem \ref{thm:torsion} we have
$$N^H_{\beta,n}=(-1)^{\mathsf{c}-1}\,\mathsf{c} \cdot I_{\beta,n}\,,$$ where
$\mathsf{c}=\mathsf{c}(\beta,n,H,X)$ is the topological number
$$
\int_X\Big(\frac16H^3+H^2\operatorname{td}_2(X)
-H \cdot \beta\Big)-n.
$$
\end{theorem}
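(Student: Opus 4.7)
The plan is essentially to apply Joyce and Song's pair formula \cite{a30} to compare the two invariants via the forgetful morphism
$$\pi\colon \hilb_{\beta,n}(\H/|H|) \;\longrightarrow\; \hilb_{\beta,n}(X),$$
which under the identification of Proposition \ref{tube} sends a Joyce--Song pair $(\I_Z,s)$ to its underlying ideal sheaf $\I_Z$, forgetting the section. The first step is to show that $\pi$ is a Zariski-locally trivial $\P^{\mathsf{c}-1}$-bundle whose fiber over $\I_Z$ is $\P(H^0(\I_Z(H)))$. Twisting the ideal sequence $0\to\I_Z\to\O_X\to\O_Z\to 0$ by $\O(H)$ and invoking Serre vanishing (guaranteed by the condition that $H$ is sufficiently positive in the sense of Definition \ref{def:pos}) forces $H^{>0}(\I_Z(H))=0$ uniformly in $\I_Z$, so $h^0(\I_Z(H)) = \chi(\I_Z(H))$. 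A direct Hirzebruch--Riemann--Roch calculation on the Calabi--Yau $X$ together with $\chi(\O_Z(H))=n+H\cdot\beta$ on the one-dimensional $Z$ identifies this dimension with $\mathsf{c}$, and cohomology and base change promotes the fiberwise description to a genuine projective bundle structure on $\pi$.

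The numerical identity I would then deduce via Behrend's weighted Euler characteristic \cite{a1}. Both moduli spaces carry symmetric perfect obstruction theories (Theorem \ref{thm:torsion} on the source, MNOP on the target), so
$$N^H_{\beta,n} \,=\, \chi\bigl(\hilb_{\beta,n}(\H/|H|),\nu\bigr), \qquad I_{\beta,n} \,=\, \chi\bigl(\hilb_{\beta,n}(X),\nu\bigr),$$
where $\nu$ denotes Behrend's constructible function in each case. The key geometric input is the Behrend function identity
$$\nu_{\hilb_{\beta,n}(\H/|H|)} \,=\, (-1)^{\mathsf{c}-1}\,\pi^{*}\nu_{\hilb_{\beta,n}(X)}.$$
Combined with $\chi(\P^{\mathsf{c}-1})=\mathsf{c}$ and the stratified multiplicativity of weighted Euler characteristics along the smooth fibration $\pi$, this identity immediately yields $N^H_{\beta,n} = (-1)^{\mathsf{c}-1}\mathsf{c}\cdot I_{\beta,n}$.

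The step I expect to be the main obstacle is establishing this Behrend function identity, which amounts to a precise comparison of the symmetric obstruction theory on the pair moduli with that on the MNOP moduli pulled back along $\pi$. I would use the exact triangle
$$\O(-H)\stackrel{s}{\longrightarrow}\I_Z \longrightarrow I^\bullet \stackrel{+1}{\longrightarrow}$$
in $D(X)$ to relate $\Ex^{*}(I^\bullet,I^\bullet)_0$ to $\Ex^{*}(\I_Z,\I_Z)_0$; applying $\Hom(-,I^\bullet)$ and $\Hom(\I_Z,-)$ and invoking $H^{>0}(\I_Z(H))=0$ together with its Serre dual, the resulting long exact sequences should exhibit the difference as the symmetric virtual tangent complex of the smooth fiber $\P^{\mathsf{c}-1}$. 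Since the Behrend function of a smooth variety of complex dimension $d$ equals $(-1)^d$, this produces the required sign $(-1)^{\mathsf{c}-1}$ and closes the argument.
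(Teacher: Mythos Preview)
Your approach matches the paper's: the $\P^{\mathsf{c}-1}$-bundle structure on $\pi$ is exactly Lemma~\ref{lemming}, and the weighted Euler characteristic argument is Corollary~\ref{corblimey}. The one place you overcomplicate matters is in flagging the Behrend function identity as the main obstacle and proposing to prove it by comparing the two symmetric obstruction theories via the exact triangle. The Behrend function $\nu_M$ is \emph{intrinsic} to the scheme $M$ --- it does not depend on any choice of obstruction theory --- and Behrend proves \cite[Proposition~1.5(i)]{a1} that for any smooth morphism $\pi\colon M\to N$ of relative dimension $d$ one has $\nu_M=(-1)^d\,\pi^*\nu_N$. Since your $\pi$ is a $\P^{\mathsf{c}-1}$-bundle, hence smooth of relative dimension $\mathsf{c}-1$, the identity follows immediately from the bundle structure you already established; no comparison of $\Ex^*(I^\bullet,I^\bullet)_0$ with $\Ex^*(\I_Z,\I_Z)_0$ is needed.
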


\begin{example} \label{gaiotto}
Most of the examples worked out explicitly in \cite[Sections 2.1 to 2.6]{a95} fit into the setting of this section and hence, the corresponding invariants are captured by the invariants $N^H_{\beta,n}$.
As an illustration, we work out the following two simple cases for linear hyperplane sections $H$ of the quintic Calabi-Yau threefold $X$.
\begin{itemize}
\item $\I_p$ is the ideal sheaf of a point $p\in X$, so $\beta=0$, $n=1$ and $$\mathsf{c}=\operatorname{td}_2(X)\cdot H+H^3/6-1=25/6+5/6-1=4.$$ Also $I_{0,1}=-\chi(X)=200$, so Theorem \ref{thm:mnop} gives
$$N^{H}_{0,1}=(-1)^3\times4\times200=-800.$$
\item $\I_C$ is the ideal sheaf of a line $C\subset X$, so $\beta \cdot H=1,\ n=1$, and hence $$\mathsf{c}=25/6+5/6-1=3.$$ Also $I_{\beta,1}=2875$, the number of rational degree 1 curves in a generic quintic, giving
$N^H_{\beta,1}=(-1)^2\times3\times2875=8625$. \hfill $\oslash$
\end{itemize}
\end{example}

\subsection*{Acknowledgments}
We thank Kai Behrend, Vincent Bouchard, Jim Bryan, Tudor Dimofte, Dagan Karp, Jan Manschot, Davesh Maulik and Yukinobu Toda for useful discussions. The second author would like to thank the University of British Columbia, the Max Planck Institute, Bonn and the Isaac Newton Institute for Mathematical Sciences, Cambridge for their hospitality. The third author is supported by the EPSRC programme grant EP/G06170X/1.

\section{Proof of Theorem \ref{thm:torsion}}\label{sec:MNOP}

A Joyce-Song stable pair \cite{a30} is a pair
$$
\Big(E,\,s\in H^0(E(H))\Big),
$$
such that
\begin{itemize}
\item $E$ is coherent sheaf on $X$, Gieseker semistable with respect to $H$, and
\item $s$ is a section which does not factor through any destabilizing subsheaf of $E$. 
\end{itemize}

A family of such pairs over a base scheme $B$ is a sheaf $\E$ over $X\times B$, flat over $B$, and a section of $\E\otimes\pi_X^*\O(H)$ such that the restriction of $(\E,s)$ to any fiber $X\times\{b\}$ is a stable pair in the above sense. Fixing a Chern character $c\in H^{\mathrm{ev}}(X,\Q)$, there is a projective scheme $$J_c(X)$$ representing the moduli functor which assigns to a scheme $B$ the set of isomorphism classes\footnote{Isomorphism is meant in the strict sense: two families of stable pairs $(\E_i,s_i)$ are isomorphic if there is an isomorphism $\E_1\to\E_2$ taking $s_1$ to $s_2$. Stable pairs have no automorphisms so there is no need to tensor by line bundles pulled back from the base (as one does to define isomorphism for families of stable sheaves, for example: stable sheaves have Aut\,$=\CC^*$).} of Joyce-Song stable pairs over $B$.

Forgetting the section $s$ and passing to the S-equivalence class of the sheaf $E$ gives a morphism
$$
J_{c}(X)\longrightarrow M_{c}(X)
$$
to the moduli space of Gieseker semistable sheaves $E$ of Chern character $c$.

We take $$c=(1,0,-\beta,-n).$$ Then the Hilbert scheme $ I_n(X,\beta)$ of ideal sheaves $\I_Z$ defines an open and closed subscheme\footnote{In fact it is all of $M_{c}(X)$ when $H^2(X,\Z)$ is torsion-free.} of $M_{c}(X)$ (see for instance the proof of Theorem 2.7 in \cite{a17} for a careful proof of this fact). Thus its inverse image
$$
J_n(X,\beta):=J_{c}(X)\times_{M_{c}(X)} I_n(X,\beta)
$$
is both open and closed in $J_{c}(X)$. It therefore has the same deformation theory as $J_{c}(X)$, so we can use the results of Joyce and Song.

\begin{prop} \label{tube}
$J_n(X,\beta)\cong\hilb_{\beta,n}(\H/|H|)$.
\end{prop}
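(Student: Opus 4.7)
The plan is to construct mutually inverse morphisms $\Phi$ and $\Psi$ using, on points, the short exact sequence
\begin{equation*}
0 \longrightarrow \O_X(-H) \stackrel{s}{\longrightarrow} \I_Z \longrightarrow i_*I_Z \longrightarrow 0
\end{equation*}
on $X$ that holds whenever $Z\subset H$: a Joyce-Song pair arises from $(Z,H)$ by taking $\I_Z$ together with the defining equation of $H$ as the section $s$, and conversely $H$ is recovered from $s$ as its zero scheme. To construct $\Phi: \hilb_{\beta,n}(\H/|H|) \to J_n(X,\beta)$, consider a $B$-point given by a flat family $\mathcal Z \subset \mathcal H \subset X \times B$. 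The divisor $\mathcal H$ is locally on $B$ cut out by a section $\widetilde s$ of $\pi_X^*\O_X(H)$, and because $\mathcal Z \subset \mathcal H$ this $\widetilde s$ factors through $\I_\mathcal Z(H) \hookrightarrow \O_{X\times B}(H)$ to give a section $s$ of $\I_\mathcal Z(H)$. Different local choices of $\widetilde s$ differ by a unit on $B$, as do the resulting sections $s$, but this ambiguity is absorbed by the corresponding scalar automorphisms of $\I_\mathcal Z$: because Joyce-Song pairs have trivial automorphism group, the local data glue (after appropriately twisting $\I_\mathcal Z$ by a line bundle pulled back from $B$, invisible to the fiberwise invariants) to a well-defined family of pairs. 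Stability is automatic: $\I_\mathcal Z$ is torsion-free of rank $1$ hence Gieseker stable, so the only extra condition is $s\neq 0$ fiberwise, which holds because $\mathcal H$ is a genuine divisor on each fiber.

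For $\Psi: J_n(X,\beta) \to \hilb_{\beta,n}(\H/|H|)$, the fiber-product definition of $J_n(X,\beta)$ forces the underlying sheaf of any $B$-family of pairs $(\I_\mathcal Z, s)$ to be an honest ideal sheaf on $X\times B$. Composing $s\colon \pi_X^*\O_X(-H) \to \I_\mathcal Z$ with the inclusion $\I_\mathcal Z \hookrightarrow \O_{X\times B}$ produces a section of $\pi_X^*\O_X(H)$; its zero scheme $\mathcal H \subset X\times B$ is flat over $B$ with fibers in $|H|$ (fiberwise non-vanishing of $s$ is built into stability), and the factorization through $\I_\mathcal Z$ forces $\mathcal Z\subset\mathcal H$, giving the required $B$-point of the relative Hilbert scheme.

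On closed points, $\Phi$ and $\Psi$ are visibly inverse to one another by inspection of the SES above, and functoriality of the constructions extends this to mutually inverse morphisms of schemes. I expect the main technical obstacle to be the careful bookkeeping of the line-bundle twist on $B$ arising because a member of $|H|$ is only specified by its defining section up to scalar; this is precisely what the ``no automorphisms'' rigidity of Joyce-Song pairs is designed to accommodate, but turning the local constructions above into a global morphism requires a modest amount of descent-theoretic care.
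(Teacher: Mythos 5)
Your map $\Phi$ is in substance the paper's construction: the paper packages your local sections $\widetilde s$ and their unit ambiguity globally by using the universal divisor $\H\subset X\times\hilb_{\beta,n}(\H/|H|)$ itself, so the section naturally lives in $\I_{\mathcal Z}\otimes\O(\H)\cong\I_{\mathcal Z}(H)\boxtimes\O(1)$ and the line bundle you must twist by is just $\O(1)$ pulled back from $|H|$. Your descent argument works once phrased correctly --- rescaling by a unit $u$ is an \emph{isomorphism of pairs} $(\I_{\mathcal Z},s)\cong(\I_{\mathcal Z},us)$ rather than something ``absorbed by automorphisms'' (the pairs have none, which is what makes the glued family unique).

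The genuine gap is in $\Psi$. The fiber product $J_c(X)\times_{M_c(X)}I_n(X,\beta)$ only exhibits $J_n(X,\beta)$ as the open and closed subscheme of $J_c(X)$ whose \emph{closed points} have underlying sheaf abstractly isomorphic to an ideal sheaf; for a family $(\E,s)$ over $B$ this says each fiber $\E_b$ is isomorphic to some $\I_{Z_b}$, and it does \emph{not} follow formally that $\E$ is ``an honest ideal sheaf on $X\times B$.'' In general it is not: at best $\E\cong\pi_B^*L\otimes\I_{\mathcal Z}$ for some line bundle $L$ on $B$, and even producing this presentation requires an argument. That argument is the real content of the paper's proof in this direction: since the fibers of $\E$ are torsion-free of rank one, $\E^{\vee\vee}$ is locally free by \cite[Lemma 6.13]{a143}; it classifies a map $B\to\operatorname{Pic}(X)$ hitting $[\O_X]$ on closed points, which is constant because $H^1(X,\O_X)=0$ makes $\operatorname{Pic}(X)$ a union of discrete reduced points; hence $\E^{\vee\vee}\cong\pi_B^*L$ and the injection $\E\hookrightarrow\E^{\vee\vee}$ exhibits $\E\cong\pi_B^*L\otimes\I_{\mathcal Z}$ with $\mathcal Z$ flat over $B$ because $\E$ is. Only then does composing $s$ with $\E\to\E^{\vee\vee}$ give a section of $\O(H)\boxtimes L$ --- not of $\pi_X^*\O(H)$ --- whose zero locus is the required family $\H$ containing $\mathcal Z$. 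So $\Psi$ is repairable, but the step you dismiss as automatic is exactly where the hypothesis $H^1(X,\O_X)=0$ and the local freeness of the double dual are used, and the line-bundle bookkeeping you correctly flagged in $\Phi$ reappears here and cannot be avoided.
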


\begin{proof}
$\hilb_{\beta,n}(\H/|H|)$ carries a pair of universal subschemes
\begin{equation} \label{notn}
\mathcal Z\subset\H\subset X\times\hilb_{\beta,n}(\H/|H|).
\end{equation}
Letting $i$ denote the second inclusion, we get the right hand column and central row of the following commutative diagram of short exact sequences.
\begin{equation} \label{comdg}
\xymatrix@=16pt{
&&0 \ar[d] & 0 \ar[d] \\
0 \ar[r] & \O_{X\times \hilb}(-\H)\ar[r]^(.65)s \ar@{=}[d] & \I_{\mathcal Z} \ar[r]\ar[d]             & i_*I_{\mathcal Z} \ar[d]\ar[r] & 0 \\
0 \ar[r] & \O_{X\times \hilb}(-\H)\ar[r] & \O_{X\times \hilb} \ar[r] \ar[d]  & i_*\O_{\H} \ar[d]\ar[r] & 0 \\
&&  i_*\O_{\mathcal Z} \ar@{=}[r]\ar[d] & i_*\O_{\mathcal Z}\ar[d] \\
&&0 & 0.\!}
\end{equation}
Filling in the diagram gives the top row, producing the flat family of stable pairs
\begin{equation} \label{amici}
\O\stackrel s\longrightarrow\I_{\mathcal Z}(\H-\pi_X^*H)\otimes\pi_X^*\O(H)
\end{equation}
over the base $\hilb_{\beta,n}(\H/|H|)$. This is classified by a map from the base to the moduli space of stable pairs:
\begin{equation} \label{oneway}
\hilb_{\beta,n}(\H/|H|)\longrightarrow J_n(X,\beta).
\end{equation}

\medskip
Similarly, $X\times J_n(X,\beta)$ carries a universal stable pair
\begin{equation} \label{sect}
\O\longrightarrow\mathcal E\otimes\pi_X^*\O(H),
\end{equation}
flat over $J_n(X,\beta)$. Since the restriction of $\mathcal E$ to each $X$-fiber is torsion-free of rank 1, its double dual $\mathcal E^{\vee\vee}$ is locally free by \cite[Lemma 6.13]{a143}. Therefore it defines a map from $J_n(X,\beta)$ to $\operatorname{Pic}(X)$ which takes closed points to the trivial line bundle $\O_X$. But $H^1(X,\O_X)=0$ so $\operatorname{Pic}(X)$ is a union of discrete reduced points and the map is constant. Pulling back a Poincar\'e line bundle shows that $\mathcal E^{\vee\vee}$ is the pullback $\pi_J^*L$ of some line bundle $L$ on $J_n(X,\beta)$. Therefore $\mathcal E\subset \mathcal E^{\vee\vee}$ must take the form
\begin{equation} \label{Eideal}
\mathcal E\,\cong\,\pi_J^*L\otimes\I_{\mathcal Z}
\end{equation}
for some subscheme $\mathcal Z\subset X\times J_n(X,\beta)$. Since $\mathcal E$ is flat over $J_n(X,\beta)$, so $\mathcal Z$ must be too.

Composing the section \eqref{sect} with $\mathcal E\to\mathcal E^{\vee\vee}$ gives a section
\begin{equation} \label{ssec}
\O_{X\times J_n(X,\beta)}\longrightarrow\O(H)\boxtimes L,
\end{equation}
nonzero on each $X$-fiber by stability. Its zero locus is a divisor $$\H\in|\O(H)\boxtimes L|,$$ giving a classifying map $J_n(X,\beta)\to |H|$ such that the pullback of $\O_{X\times|H|}(\H)$ is $\O(H)\boxtimes L$. Since the section \eqref{ssec} factors through \eqref{Eideal} it follows that
$$
\mathcal Z\subset\mathcal H,
$$
giving us a classifying map from our base $J_n(X,\beta)$ to $\hilb_{\beta,n}(\H/|H|)$.
By inspection it is the inverse of the map \eqref{oneway}.
\end{proof}

\begin{defn}\label{def:pos} Assume that $\beta\in H_2(X;\Z)$ and  $n\in \Z$ are given. We say $H$ is sufficiently positive with respect to $\beta,n$ if 
\begin{equation}\label{van}
H^i(X,\I_Z(H))=0
\end{equation} 
for $i>0$ for any ideal sheaf $\I_Z\in I_n(X,\beta)$. For fixed $(\beta,n)$ the ideal sheaves $\I_Z\in I_n(X,\beta)$ form a bounded family, so \eqref{van} is satisfied for all $H$ sufficiently positive.
\hfill $\oslash$
\end{defn} 

\begin{cor} \label{deff} Suppose that $H^1(X,\O_X)=0$ and $\beta,n,H$ satisfy \eqref{van}.
Then, using the same notation as in \eqref{notn}, the deformation-obstruction theory of the sheaves $i_*I_Z$ \eqref{shea} given by \cite[Section 4.4]{a10},
\begin{equation} \label{arrow}
\tau^{[1,2]}R\pi_{H*}R\Home(i_*I_{\mathcal Z},i_*I_{\mathcal Z})[2]\longrightarrow
\mathbb L_{\hilb_{\beta,n}(\H/|H|),}
\end{equation}
defines a symmetric perfect obstruction theory on $\hilb_{\beta,n}(\H/|H|)$.
\end{cor}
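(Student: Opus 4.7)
The plan is to deduce the corollary from Joyce-Song's construction \cite{a30}, transported across the isomorphism of Proposition \ref{tube}. On $J_n(X,\beta)$, Joyce and Song produce a symmetric perfect obstruction theory whose virtual cotangent complex is
$$
\tau^{[1,2]}R\pi_*R\Home(\mathbb I^\bullet,\mathbb I^\bullet)[2],
$$
where $\mathbb I^\bullet$ denotes the universal two-term complex \eqref{JSideal}. The vanishing \eqref{van}, which holds uniformly over the bounded family $I_n(X,\beta)$ by Definition \ref{def:pos}, is precisely what is needed for their hypotheses: it guarantees that the forgetful map to $I_n(X,\beta)$ has affine-space fibres parametrising the section $s$, and that the pair moduli space inherits the expected deformation theory as a moduli of complexes in $D(X)$.

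The second ingredient is the globalisation of the quasi-isomorphism $I^\bullet\simeq i_*I_Z$ already exploited in the introduction. Diagram \eqref{comdg}, combined with \eqref{amici}, exhibits the universal torsion sheaf $i_*I_{\mathcal Z}$ as the cone of the universal Joyce-Song section, up to a line bundle pulled back from the base. Such a pullback twist disappears from the internal Hom, so there is a natural isomorphism
$$
R\Home(\mathbb I^\bullet,\mathbb I^\bullet)\ \cong\ R\Home(i_*I_{\mathcal Z},i_*I_{\mathcal Z})
$$
on $X\times\hilb_{\beta,n}(\H/|H|)$. Applying $R\pi_{H*}$ and truncating to cohomological degrees $[1,2]$ then identifies the two candidate virtual cotangent complexes on the nose.

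What remains, and what I expect to be the main technical step, is to check that the morphism \eqref{arrow} constructed in \cite[\S 4.4]{a10} from the truncated (trace-free) Atiyah class of $i_*I_{\mathcal Z}$ matches, under the above identification, the Joyce-Song obstruction-theory morphism built from the Atiyah class of $\mathbb I^\bullet$. Naturality of the Atiyah class under the quasi-isomorphism $\mathbb I^\bullet\simeq i_*I_{\mathcal Z}$ makes this plausible, but one has to be careful because the two sources phrase the construction slightly differently — one for a sheaf, the other for a two-term complex carrying a section — and because the truncation plays the role of removing the identity (in $\Hom$) and its Serre dual (in $\ext^3$) on both sides simultaneously. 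Once the compatibility is established, perfectness of \eqref{arrow}, its symmetry (automatic from Serre duality together with the Calabi-Yau condition $K_X\cong\O_X$), and its obstruction-theoretic property are all inherited from Joyce-Song's theorem, proving the corollary.
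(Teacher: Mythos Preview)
Your outline matches the paper's proof: transport the Joyce--Song obstruction theory across Proposition~\ref{tube}, use the universal quasi-isomorphism coming from diagram~\eqref{comdg} to identify the two $R\Home$ complexes, and then argue that the two Atiyah-class maps to $\mathbb L_{\hilb}$ agree. Where you diverge from the paper is in your assessment of the last step, and this is not innocent.

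The universal quasi-isomorphism is \emph{not} $\mathbb I^\bullet\simeq i_*I_{\mathcal Z}$ but rather $\mathbb I^\bullet\simeq i_*I_{\mathcal Z}\otimes\pi_H^*\O(1)$, the twist being by the pullback of $\O(1)$ from $|H|$. You are right that this twist disappears from $R\Home$, but it does \emph{not} disappear from the Atiyah class: one has $\operatorname{At}(i_*I_{\mathcal Z}(1))=\operatorname{At}(i_*I_{\mathcal Z})+\operatorname{id}_{i_*I_{\mathcal Z}}\otimes\operatorname{At}(\O(1))$. So naturality of the Atiyah class, far from settling the comparison, shows that the Joyce--Song map and the map~\eqref{arrow} differ by the composition
\[
\mathbb L_{\hilb}^\vee\ \xrightarrow{\ \operatorname{At}(\O(1))\ }\ R\pi_{H*}\O_{X\times\hilb}[1]\ \xrightarrow{\ \operatorname{id}\ }\ R\pi_{H*}R\Home(i_*I_{\mathcal Z},i_*I_{\mathcal Z})[1].
\]
The paper kills this discrepancy by observing that after passing to the (unique) lift to the $\tau^{[0,1]}$ truncation, the middle term becomes $\tau^{[0,1]}R\pi_{H*}\O_{X\times\hilb}[1]$, which vanishes precisely because $H^1(X,\O_X)=0$. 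This is the only place in the proof where that hypothesis is used, and it does not appear anywhere in your argument; without it the two obstruction-theory morphisms genuinely differ.
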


In \eqref{arrow}, $\pi_H$ denotes the projection from $X\times\hilb_{\beta,n}(\H/|H|)$ to its second factor. We recall the definition of the arrow \cite[Section 4]{a10}. Project the Atiyah class
$$
\operatorname{At}(i_*I_{\mathcal Z})\,\in\,\operatorname{Ext}^1\big(i_*I_{\mathcal Z},\,i_*I_{\mathcal Z}\otimes\mathbb L_{X\times\hilb}\big)
$$
to
$$
\operatorname{Ext}^1\big(i_*I_{\mathcal Z},\,i_*I_{\mathcal Z}\otimes
\pi_H^*\mathbb L_{\hilb}\big)
$$
and consider the resulting element as a map
\begin{equation} \label{At0}
\pi_H^*\mathbb L_{\hilb}^\vee\longrightarrow
R\Home(i_*I_{\mathcal Z},i_*I_{\mathcal Z})[1].
\end{equation}
By adjunction this induces
\begin{equation} \label{At}
\mathbb L_{\hilb}^\vee\longrightarrow
R\pi_{H*}R\Home(i_*I_{\mathcal Z},i_*I_{\mathcal Z})[1].
\end{equation}
Next we project to the $\tau^{\ge0}$ truncation of the right hand side. In turn this receives a map from the truncation $\tau^{[0,1]}$, and in \cite[Section 4.4]{a10} it is shown that the map from $\mathbb L_{\hilb}^\vee$ lifts uniquely to it. Dualizing and using Serre duality down the fibers of $\pi_H$ gives \eqref{arrow}.

\begin{proof}[Proof of Corollary \ref{deff}.]
For $H\gg0$, Joyce and Song \cite[Theorem 12.20]{a30} give a symmetric perfect obstruction theory on $J_c(X)$:
\begin{equation} \label{arro}
\tau^{[1,2]}R\pi_{J*}R\Home(\bI^\bullet,\bI^\bullet)[2]\longrightarrow\mathbb L_{J_c(X)}.
\end{equation}
Here $\pi_J$ is the projection $X\times J_c(X)\to J_c(X)$ and $\bI^\bullet\in D(X\times J_c(X))$ is the 2-term complex
$$
\O_{X\times J_c(X)}(-\pi_X^*H)\longrightarrow\E
$$
defined by the universal stable pair $\O\to\E\otimes\pi_X^*\O(H)$. (From now on we suppress the $\pi_X^*$.) The arrow in \eqref{arro} is the composition of Serre duality for $R\pi_{J*}R\Home(\bI^\bullet,
\bI^\bullet)$ and the Atiyah class of $\bI^\bullet$, just as in \eqref{At}.

We restrict their result to $\hilb_{\beta,n}(\H/|H|)$ using Proposition \ref{tube}.
Their precise $H\gg0$ condition is that the sheaves in their stable pairs are $H$-regular; this becomes the cohomology vanishing condition \eqref{van}. The first row of the diagram \eqref{comdg} gives the quasi-isomorphism
$$
\bI^\bullet\otimes\O(H-\H)\ =\ \big\{\O_{X\times\hilb_{\beta,n}(\H/|H|)}(-\H)\to\I_{\mathcal Z}\big\}\ \simeq\ i_*I_{\mathcal Z}.
$$
Using the line bundle $\O(1)$ pulled back from the projective space $|H|$, we have the isomorphism
$$
\O(\mathcal H)\cong\O(H)\boxtimes\O(1).
$$
In particular, $\bI^\bullet\simeq i_*I_{\mathcal Z}(\H-H)\cong i_*I_{\mathcal Z}(1)$, so \eqref{arro} can be written
$$
\tau^{[1,2]}R\pi_{H*}R\Home(i_*I_{\mathcal Z}(1),i_*I_{\mathcal Z}(1))[2]
\longrightarrow\mathbb L_{\hilb_{\beta,n}(\H/|H|).}
$$
Again the arrow is given by the composition of Serre duality and the Atiyah class of $i_*I_{\mathcal Z}(1)$.

This is almost identical to (\ref{arrow}, \ref{At}) except for the twist by $\O(1)$. Now
$$
\operatorname{At}(i_*I_{\mathcal Z}(1))\,=\,\operatorname{At}(i_*I_{\mathcal Z})+\operatorname{id}_{i_*I_{\mathcal Z}}\otimes\operatorname{At}(\O(1))
$$
in $\operatorname{Ext}^1\big(i_*I_{\mathcal Z}(1),\,i_*I_{\mathcal Z}(1)\otimes
\pi_H^*\mathbb L_{\hilb}\big)\cong\operatorname{Ext}^1\big(i_*I_{\mathcal Z},\,i_*I_{\mathcal Z}\otimes\pi_H^*\mathbb L_{\hilb}\big)$. That is, tensoring with $\O(1)$ changes the map \eqref{At0} by addition of the following composition
$$\xymatrix@=40pt{
\pi_H^*\mathbb L_{\hilb}^\vee \ar[r]^(.46){At(\O(1))} &
\O_{X\times\hilb}[1] \ar[r]^(.38){\operatorname{id}_{i_*I_{\mathcal Z}}} & R\Home(i_*I_{\mathcal Z},i_*I_{\mathcal Z})[1].}
$$
Therefore \eqref{At} changes by addition of the composition
$$\xymatrix@=40pt{
\mathbb L_{\hilb}^\vee \ar[r]^(.36){At(\O(1))} &
R\pi_{H*}\O_{X\times\hilb}[1] \ar[r]^(.42){\operatorname{id}_{i_*I_{\mathcal Z}}} & R\pi_{H*}R\Home(i_*I_{\mathcal Z},i_*I_{\mathcal Z})[1].}
$$
The truncation procedure gives unique lifts to the $\tau^{[0,1]}$ truncation of the central and right hand terms. But our $H^1(X,\O_X)=0$ condition means that $\tau^{[0,1]}$ applied to the central term is zero.
\end{proof}

Corollary \ref{deff} completes the proof of Theorem \ref{thm:torsion}.

\section{Proof of Theorem \ref{thm:mnop}}
By Corollary \ref{deff} we define the invariants associated to $\hilb_{\beta,n}(\H/|H|)$. These invariants give a virtual count of the pairs \eqref{triple}. Then we will relate them to the MNOP invariants $I_{\beta,n}$ counting subschemes $Z\subset X$.

\begin{defn} \label{defn} Let $X$ be a Calabi-Yau 3-fold with $H^1(X,\O_X)=0$ and suppose that $H$ is sufficiently positive with respect to $\beta,n$ in the sense of Definition \ref{def:pos}.

The perfect obstruction theories of Corollary \ref{deff} and \cite{a20} respectively endow $\hilb_{\beta,n}(\H/|H|)$ and $ I_n(X,\beta)$ with virtual cycles of dimension zero. We define
$$
N^H_{\beta,n}:=\int_{[\hilb_{\beta,n}(\H/|H|)]^{vir}}1\,,\qquad
I_{\beta,n}:=\int_{[ I_n(X,\beta)]^{vir}}1\,.
$$
Since these obstruction theories are symmetric, by \cite{a1} the invariants can also be written as weighted Euler characteristics
\begin{eqnarray} \label{Ech}
N^H_{\beta,n} &=& \chi\big(\!\hilb_{\beta,n}(\H/|H|),\nu_{\hilb_{\beta,n}(\H/|H|)}\big),
\\ I_{\beta,n} &=& \chi\big( I_n(X,\beta),\nu_{ I_n(X,\beta)}\big), \nonumber
\end{eqnarray}
where $\nu_M$ denotes the Behrend function \cite{a1} of a scheme $M$. 
\hfill $\oslash$
\end{defn}

By the usual arguments \cite{a10, a30, a20} these invariants are unchanged under smooth deformation of $X$.

There is an obvious forgetful map
\begin{equation} \label{forget}
\hilb_{\beta,n}(\H/|H|)\longrightarrow I_n(X,\beta), \qquad (Z\subset H)\mapsto Z.
\end{equation}
The fiber over $Z\subset X$ is $\P(H^0(\I_Z(H)))$. More globally we have the following result. We use the notation
$$
\mathcal Z\subset X\times I_n(X,\beta)\stackrel{\pi_I}\longrightarrow I_n(X,\beta)
$$
for the universal subscheme and the projection to the second factor.

\begin{lemma} \label{lemming} Supposing again that $\beta,n,H$ satisfy \eqref{van}, then the map \eqref{forget} is the projective bundle
$$\hilb_{\beta,n}(\H/|H|)\cong\P(\pi_{I*}\I_{\mathcal Z}(H)).$$
\end{lemma}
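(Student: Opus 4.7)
The plan is to identify $\hilb_{\beta,n}(\H/|H|)$ with $\P(\pi_{I*}\I_{\mathcal Z}(H))$ by constructing mutually inverse morphisms over $I_n(X,\beta)$. The underlying geometric picture is that, fiberwise over a point $[Z]\in I_n(X,\beta)$, the hyperplanes $H\in|H|$ with $H\supset Z$ are in bijection with lines in $H^0(X,\I_Z(H))$, giving the expected fiber $\P(H^0(\I_Z(H)))$.

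First I would verify that $\pi_{I*}\I_{\mathcal Z}(H)$ is locally free, so that the projectivisation is well defined. The universal subscheme $\mathcal Z\subset X\times I_n(X,\beta)$ is flat over $I_n(X,\beta)$, so the short exact sequence
$$0\to\I_{\mathcal Z}(H)\to\O_{X\times I_n(X,\beta)}(H)\to\O_{\mathcal Z}(H)\to 0$$
shows that $\I_{\mathcal Z}(H)$ is too. Together with the hypothesis \eqref{van}, cohomology and base change then give $R^i\pi_{I*}\I_{\mathcal Z}(H)=0$ for $i>0$, that $\pi_{I*}\I_{\mathcal Z}(H)$ is a vector bundle with fiber $H^0(\I_Z(H))$ at $[Z]$, and that its formation commutes with arbitrary base change on $I_n(X,\beta)$.

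Next I would build $\Psi\colon\hilb_{\beta,n}(\H/|H|)\to\P(\pi_{I*}\I_{\mathcal Z}(H))$ covering the forgetful map $f$ of \eqref{forget}. The top row of diagram \eqref{comdg} provides a canonical inclusion $\O(-\H)\hookrightarrow\I_{\mathcal Z}$ on $X\times\hilb_{\beta,n}(\H/|H|)$. As in Proposition \ref{tube}, $\O(\H)\cong\O(H)\boxtimes L$ for some line bundle $L$ on $\hilb_{\beta,n}(\H/|H|)$, so twisting by $\O(H)$ and pushing down via $\pi_\hilb$ yields $L^{-1}\hookrightarrow\pi_{\hilb*}\I_{\mathcal Z}(H)\cong f^*\pi_{I*}\I_{\mathcal Z}(H)$ by flat base change. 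Fiberwise this is the distinguished section cutting out the hyperplane $H\supset Z$, which is nonzero, so the inclusion is a line subbundle; the universal property of projectivisation then produces $\Psi$.

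Conversely I would construct the inverse $\Phi\colon\P(\pi_{I*}\I_{\mathcal Z}(H))\to\hilb_{\beta,n}(\H/|H|)$ from a universal family of pairs. Writing $p$ for the projection $\P(\pi_{I*}\I_{\mathcal Z}(H))\to I_n(X,\beta)$, the tautological subbundle $\O(-1)\hookrightarrow p^*\pi_{I*}\I_{\mathcal Z}(H)$ adjoints via base change to an inclusion $\O(-H)\boxtimes\O(-1)\hookrightarrow\I_{\mathcal Z^\P}$ on $X\times\P(\pi_{I*}\I_{\mathcal Z}(H))$, where $\mathcal Z^\P$ is the pullback of $\mathcal Z$. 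Composing with $\I_{\mathcal Z^\P}\hookrightarrow\O$ gives a section of $\O(H)\boxtimes\O(1)$ that is nonzero on each $X$-fiber; its zero locus $\widetilde{\H}$ is a relative Cartier divisor lying in $|H|$ on each fiber and containing $\mathcal Z^\P$ by construction. The family $\mathcal Z^\P\subset\widetilde{\H}$ is classified by $\Phi$. Finally I would verify $\Phi\circ\Psi=\mathrm{id}$ and $\Psi\circ\Phi=\mathrm{id}$ by tracing the universal data through both constructions, which match by design. The main subtlety will be confirming that $L^{-1}\hookrightarrow f^*\pi_{I*}\I_{\mathcal Z}(H)$ is a genuine subbundle (not merely injective) and that the various flat base change and ideal-sheaf pullback identifications go through cleanly; both reduce to the nowhere-vanishing of the canonical fiber-hyperplane section, which is automatic from the existence of the family.
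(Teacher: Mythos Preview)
Your proposal is correct and follows essentially the same approach as the paper: both construct mutually inverse morphisms by passing between the tautological line subbundle on $\P(\pi_{I*}\I_{\mathcal Z}(H))$ and the universal section $\O(-\H)\hookrightarrow\I_{\mathcal Z}$ on the Hilbert side, using the vanishing \eqref{van} to ensure $\pi_{I*}\I_{\mathcal Z}(H)$ is a vector bundle. The only cosmetic difference is that the paper routes one direction through the Joyce--Song moduli space via Proposition~\ref{tube} (producing a family of stable pairs classified by a map to $J_n(X,\beta)\cong\hilb_{\beta,n}(\H/|H|)$), whereas you build the family $\mathcal Z^\P\subset\widetilde{\H}$ and classify it directly by the relative Hilbert scheme; the underlying constructions coincide.
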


\begin{proof} The cohomology vanishing condition \eqref{van} ensures that $\pi_{I*}\I_{\mathcal Z}(H)$ is a vector bundle. Its pullback to $\P(\pi_{I*}\I_{\mathcal Z}(H))$ carries a tautological subbundle
$$
\O(-1)\hookrightarrow\pi_{I*}\I_{\mathcal Z}(H),
$$
where again we have suppressed the pullback map. By adjunction we get
$$
\pi_I^*\O(-1)\hookrightarrow\I_{\mathcal Z}(H)
$$
and so a nowhere vanishing section of $\I_{\mathcal Z}(H)\boxtimes\O(1)=\I_{\mathcal Z}(\mathcal H)$. This is a family of stable pairs over the base $\P(\pi_{I*}\I_{\mathcal Z}(H))$, classified by a map
\begin{equation} \label{pam}
\P(\pi_{I*}\I_{\mathcal Z}(H))\longrightarrow J_n(X,\beta)\,\cong\,\hilb_{\beta,n}(\H/|H|).
\end{equation}

\medskip Conversely, $X\times\hilb_{\beta,n}(\H/|H|)$ carries the universal family of stable pairs \eqref{amici}. Twisting by $\O(-1)$ (pulled back from $|H|$) gives
$$
\O(-1)\stackrel s\longrightarrow\I_{\mathcal Z}(H).
$$
Pushing down to $\hilb_{\beta,n}(\H/|H|)$ gives
\begin{equation} \label{lsb}
\O(-1)\hookrightarrow\pi_{H*}\I_{\mathcal Z}(H).
\end{equation}
Now $\pi_{H*}\I_{\mathcal Z}(H)$ on $\hilb_{\beta,n}(\H/|H|)$ is the pullback of the bundle $\pi_{I*}\I_{\mathcal Z}(H)$ on $I_n(X,\beta)$ via the map \eqref{forget}. Therefore the line subbundle \eqref{lsb} is
classified by a map $\hilb_{\beta,n}(\H/|H|)\to\P(\pi_{I*}\I_{\mathcal Z}(H))$, the inverse of \eqref{pam}.
\end{proof}

\begin{cor} \label{corblimey} For $H^1(X,\O_X)=0$ and $\beta,n,H$ satisfying \eqref{van}, the invariants of Definition \ref{defn} satisfy
$$
N^H_{\beta,n}=(-1)^{\mathsf{c}-1}\,\mathsf{c}\cdot I_{\beta,n\,},
$$
where $\mathsf{c}=\mathsf{c}(\beta,n,H,X)$ is the topological number
$$
\mathsf{c}=\chi(\I_Z(H))=\int_X\Big(\frac16H^3+H^2\operatorname{td}_2(X)
-H \cdot \beta\Big)-n.
$$
\end{cor}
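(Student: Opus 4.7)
The plan is to exploit the projective bundle structure of Lemma \ref{lemming} and work entirely at the level of the Behrend-weighted Euler characteristic formulation \eqref{Ech}, which sidesteps any direct comparison between the two symmetric obstruction theories. Write $\pi\colon\hilb_{\beta,n}(\H/|H|) \to I_n(X,\beta)$ for the forgetful map \eqref{forget}. By Lemma \ref{lemming}, $\pi$ is the projectivization $\P(V)\to I_n(X,\beta)$ of $V := \pi_{I*}\I_{\mathcal Z}(H)$, and the cohomology vanishing \eqref{van} makes $V$ locally free of rank $\chi(\I_Z(H)) = \mathsf{c}$. In particular $\pi$ is smooth and proper of relative dimension $\mathsf{c}-1$ with Zariski-locally trivial fibers $\P^{\mathsf{c}-1}$.

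The first key step is the behaviour of Behrend's function under smooth morphisms: for $f\colon M \to N$ smooth of relative dimension $d$ one has $\nu_M = (-1)^d f^*\nu_N$ (\cite{a1}). Applied to $\pi$ this gives $\nu_{\hilb_{\beta,n}(\H/|H|)} = (-1)^{\mathsf{c}-1}\,\pi^*\nu_{I_n(X,\beta)}$. The second step is to stratify $I_n(X,\beta)$ by level sets of the constructible function $\nu_{I_n(X,\beta)}$ and use multiplicativity of the topological Euler characteristic over Zariski-locally trivial fibrations, together with $\chi(\P^{\mathsf{c}-1}) = \mathsf{c}$ and \eqref{Ech}, to obtain
$$
N^H_{\beta,n}\ =\ (-1)^{\mathsf{c}-1}\,\chi(\P^{\mathsf{c}-1})\cdot \chi\bigl(I_n(X,\beta),\,\nu_{I_n(X,\beta)}\bigr)\ =\ (-1)^{\mathsf{c}-1}\,\mathsf{c}\cdot I_{\beta,n}.
$$

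The final step is the closed topological expression for $\mathsf{c}$. I would apply Hirzebruch-Riemann-Roch to the short exact sequence $0 \to \I_Z(H) \to \O_X(H) \to \O_Z(H) \to 0$, using $\ch(\O_Z) = (0,0,\beta,n)$ and the vanishings $\operatorname{td}_1(X) = 0 = \operatorname{td}_3(X)$ (the latter being equivalent to $\chi(\O_X) = 0$, which holds on a strict Calabi-Yau threefold with $H^1(\O_X)=0$ by Serre duality). This reduces $\chi(\O_X(H)) - \chi(\O_Z(H))$ to the claimed intersection-number expression. The only non-formal ingredient in the whole argument is the smooth-pullback identity for Behrend's function; all remaining manipulations are routine bookkeeping with the topological Euler characteristic and Riemann-Roch on $X$.
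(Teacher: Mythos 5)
Your argument is correct and is essentially the paper's own proof: both rest on Lemma \ref{lemming} making \eqref{forget} a $\P^{\mathsf{c}-1}$-bundle, the $(-1)^{\mathsf{c}-1}$-twisted pullback of the Behrend function under a smooth morphism of relative dimension $\mathsf{c}-1$, and multiplicativity of the weighted Euler characteristic over the fibers via \eqref{Ech}. Your Riemann--Roch evaluation of $\mathsf{c}=\chi(\I_Z(H))$ just makes explicit a computation the paper leaves implicit.
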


\begin{proof} By Lemma \ref{lemming}, the map \eqref{forget}
$$
\hilb_{\beta,n}(\H/|H|)\longrightarrow I_n(X,\beta)
$$
is smooth of relative dimension $\chi(\I_Z(H))-1$. Therefore by \cite[Proposition 1.5(i)]{a1}, the Behrend function of $\hilb_{\beta,n}(\H/|H|)$ is the pullback of that of $ I_n(X,\beta)$, multiplied by $(-1)^{\chi(\I(H))-1}$. Since the fibers of the map \eqref{forget} have Euler characteristic $\chi(\I_Z(H))$, the formulas \eqref{Ech} give the result.
\end{proof}

Corollary \ref{corblimey} finishes the proof of Theorem \ref{thm:mnop}. Note that if $H^2(X,\Z)$ is torsion-free then $I_n(X,\beta)\cong M_{c}(X)$ for $c=(1,0,-\beta,-n)$, and $J_{c}(X)\cong \hilb_{\beta,n}(\H/|H|)$ by Proposition \ref{tube}. In this case Corollary \ref{corblimey} is a special case of the wall-crossing formula \cite[Theorem 5.27]{a30}.

\section{S-duality and modularity} \label{ami}
Our motivation for defining the invariants $N^H_{\beta,n}$ was to try to understand how to define the ``\emph{supersymmetric BPS invariants associated to supersymmetric D4-D2-D0 systems}" studied by string theorists \cite{a78,a95,a100}. Based on relation of D4-D2-D0 branes with 2-dimensional conformal field theory \cite{MSW97}, the generating series of these BPS invariants are argued to be modular forms \cite{CDDMV06, a78}. This remarkable modularity can also be deduced from the S-duality (electric-magnetic duality) of string theory \cite{DM07}.

Most of the examples of D4-D2-D0 systems studied in \cite[Sections 2.1--2.6]{a95} are of the form \eqref{shea} above, which is what led to our definition. However it seems that in general one should count \emph{all} (semi)stable torsion sheaves with 2-dimensional support of the right topological type, not just those of the form \eqref{shea}. Assuming that the physical count is given by Joyce-Song's generalized DT invariant, the S-duality predictions take the following form.

Assume for simplicity that $\operatorname{Pic} X$ is generated by an ample divisor $L$. Fix $k\in \Z_{>0}$ and let $H$ be a smooth element of $|kL|$. Denoting the generator of $H^4(X,\Z)\cong \Z$ by $\ell$ and fixing $i,n\in \Z$, we
let $$c=c(i,n)=(0,\,H,\,H^2/2-i \ell,\,\chi(\O_{H})-H\cdot \operatorname{td}_2 (X)-n).$$ Then there is a moduli space $M_c(X)$ of Gieseker-semistable sheaves on $X$ with Chern character $c$, and Joyce-Song's generalized Donaldson-Thomas invariant $\overline{DT}(c)\in\Q$ \cite{a30}. We define the generating function of these invariants with fixed 1st and 2nd Chern characters by $$Z^{H}_i(q)=\sum_{n\in \Z}\overline{DT}(c(i,n))q^n.$$
Tensoring by $\O(\pm L)$ induces isomorphisms of the moduli spaces $M_c(X)$, so the generating series $Z^{H}_i(q)$ and $Z^{H}_{i\pm kL^3}(q)$ only differ by a shift in the power of $q$. 

According to \cite{a78, a95} the vector of generating series $$\Big(q^{a_i}Z^{H}_i(q)\Big)_{i=0}^{kL^3-1}$$ should be a holomorphic vector-valued modular form of weight $-3/2$, where
$$a_i\ =\ \frac{(2i+H^3)^2}{8H^3}-\frac{H^3}{8}-\frac{\chi(H)}{24}\ \in\ \Q.$$
When all of the sheaves \eqref{shea} are stable\footnote{For instance this is the case when all members of the linear system $|H|$ are reduced and irreducible. The hyperplane sections of the quintic threefold have this property by the Lefschetz hyperplane theorem.} one can think of $N^H_{\beta,n}$ as the contribution of the component $\hilb_{\beta,n}(\H/|H|)$ of the moduli space of torsion sheaves to the physicists' numbers. But more generally one would expect to be able to relate our invariants $N^H_{\beta,n}$ to the $\overline{DT}(c)$ via a sequence of wall crossings.\footnote{In \cite{a114} Toda studies some different but remarkable wall crossings. These relate the counting of torsion sheaves to the counting of both ideal sheaves \cite{a15} and stable pairs \cite{a17}. There may be some connection: a hint is provided by the fact that Fujita's conjecture (which would determine which $H$ are sufficiently positive in the sense of Definition \ref{def:pos}) enters into his analysis. Manschot \cite{M10} also studies another wall crossing involving these invariants.} Ideally these would be in the space of Bridgeland stability conditions on $D(X)$, starting from a stability condition that approximates Joyce-Song stability for the complexes \eqref{JSideal}, and ending with one approximating Gieseker stability for the quasi-isomorphic sheaves \eqref{shea}. 

In combination with Corollary \ref{corblimey} this would express the MNOP invariants $I_{\beta,n}$ in terms of modular forms via universal formulae arising from the wall crossing. This would constrain the $I_{\beta,n}$ enormously, so that calculation of a finite number of these invariants should determine all the rest. We plan to return to this in future work.

\noindent {\tt{amingh@math.umd.edu}} \\
\noindent {\small University of Maryland \\ College Park, MD 20742-4015, USA} \\

\noindent {\tt{sheshmani.1@math.osu.edu}} \\
\noindent{\small Department of mathematics at the Ohio State University, 600 Math Tower, 231 West 18th Avenue, Columbus, OH 43210-1174} \\

\noindent {\tt{rpwt@ic.ac.uk}} \\
\noindent{\small Imperial College London \\ 180 Queen's Gate, London SW7 2AZ, UK}


\begin{thebibliography}{12}

  
  
\bibitem[Beh09]{a1}
{K.~Behrend}, \emph{{{Donaldson}--{Thomas} invariants via microlocal geometry}},
  {Annals of Math.} \textbf{170, pp. 1307--1338} (2009).


\bibitem[CDDMV06]{CDDMV06}
M.~Cheng, J.~DeBoer, R.~Dijkgraaf,J. Manschot, and E.~Verlinde \emph{{A farey tail for attractor black holes}}, {J.  High Energy Phys.} \textbf{129 (024)} (2011).


\bibitem[DM07]{DM07}
F.~Denef, G.~Moore \emph{{Split states, entropy enigmas, holes and halos}}, {J.  High Energy Phys.} \textbf{11 (129)} (2011).

\bibitem[GSY07]{a78}
D.~Gaiotto, A.~Strominger, and X.~Yin, \emph{{The M5-Brane Elliptic Genus:
  Modularity and BPS States}}, {J.  High Energy Phys.} \textbf{8 (070)} (2007).

\bibitem[GY07]{a95}
{D.~Gaiotto and X.~Yin}, \emph{{Examples of M5-Brane Elliptic Genera}},  {J. High Energy Phys.} \textbf{11 (004)} (2007).
  

\bibitem[HT10]{a10}
{D.~Huybrechts and R.~P. Thomas}, \emph{Deformation--Obstruction theory for
  complexes via {A}tiyah and {Kodaira}--{Spencer} classes}, {Math. Ann.}
  \textbf{346 (3), pp. 545--569} (2010).

\bibitem[JS11]{a30}
{D.~Joyce and Y.~Song}, \emph{A theory of generalized Donaldson--Thomas
  invariants}, Memoirs of the AMS \textbf{217 (1020)} (2011).


\bibitem[Kol90]{a143}
J.~Koll\'{a}r, \emph{{Projectivity of complete moduli}}, {J.  Differential Geom.}
  \textbf{32, pp. 235--268} (1990).

\bibitem[MSW97]{MSW97}
{J.~Maldacena, A.~Strominger, E.~Witten},
  \emph{Black hole entropy in {M}-theory}, {J. High Energy Phys.} \textbf{12} (1997).

\bibitem[M10]{M10}
 {J.~Manschot},
   \emph{Wall-crossing of {D4}-branes using flow trees}, {Adv. Theor. Math. Phys.} \textbf{15 (1), pp. 1-42} (2011).


\bibitem[MNOP06]{a15}
{D.~Maulik, N.~Nekrasov, A.~Okounkov, and R.~Pandharipande},
  \emph{{Gromov}--{Witten} theory and {Donaldson}--{Thomas} theory. {I}}, {Compos.
  Math.} \textbf{11, pp. 1263--1285} (2006).

\bibitem[OSV01]{a100}
{H.~Ooguri, A.~Strominger, and C.~Vafa}, \emph{{Black Hole Attractors and the
  Topological String}}, {Physics Review D.} \textbf{70 (10), pp. 106--119 } (2001).

\bibitem[PT09]{a17}
{R.~Pandharipande and R.~P. Thomas}, \emph{{Curve} counting via stable pairs in
  the derived category}, {Invent. Math.} \textbf{178, pp. 407--447} (2009).

\bibitem[Tho00]{a20}
{R.~P.~Thomas}, \emph{A holomorphic {Casson} invariant for {Calabi}-{Yau}
  3-folds, and bundles on {K3} fibrations}, {J. Differential Geom.} \textbf{54, pp. 367--438}
  (2000).

\bibitem[Tod11]{a114}
{Y.~Toda}, \emph{{Bogomolov--Gieseker type inequality and counting invariants}}, {J. Topol.} 
  \textbf{6 (1), pp. 217-250} (2013).


\end{thebibliography}
\end{document}